\documentclass[12pt]{amsart}

\usepackage{amstext}
\usepackage{amsthm}
\usepackage{amsmath}
\usepackage{latexsym}
\usepackage{amsfonts}
\usepackage{mathtools}
\usepackage{enumerate}
\usepackage{geometry}
\usepackage{blindtext}
\usepackage{incgraph,tikz}
\usepackage[utf8]{inputenc}
\usepackage{bbold}
\usepackage{nameref}
\usepackage[normalem]{ulem}
\usepackage{wasysym}
\usepackage{csquotes}
\usepackage{anyfontsize}
\usepackage{hyperref}

\newtheorem*{problem*}{Open Problem}
\newtheorem{theorem}{Theorem}[section]
\newtheorem*{question*}{Question}
\newtheorem{lemma}[theorem]{Lemma}

\newtheorem{problem}[theorem]{Problem}
\theoremstyle{definition}
\theoremstyle{definition}
\theoremstyle{definition}
\numberwithin{equation}{section}

\allowdisplaybreaks

\newcommand\Hol{{\mathrm{Hol}(\mathbb D)}}

\newcommand\CC{{\mathbb C}}
\newcommand\RR{{\mathbb R}}
\newcommand\DD{{\mathbb D}}
\newcommand\TT{{\mathbb T}}

\newcommand\BB{{\mathbb B_n}}
\newcommand\SSS{{\mathbb S_n}}
\newcommand\dBB{{\overline{\mathbb B}_n}}

\title[cyclicity and polynomials]{An overview of cyclicity in Dirichlet-type spaces in the unit ball}
\author{Dimitrios Vavitsas}
\email{vavitsas@mail.sysu.edu.cn}
\address{School of Mathematics (Zhuhai), Sun Yat-Sen University, Zhuhai, Guangdong, 519082, P. R.
China}
\thanks{}
\keywords{Dirichlet-type spaces, unit ball, cyclic polynomial}
\subjclass[2020]{Primary: 47A13, 47A15; Secondary: 32A37, 32A60}

\begin{document}
\begin{abstract}
We shall discuss the characterization of cyclicity of polynomials in Dirichlet-type spaces in the Euclidean unit ball of $\CC^n$ which remains an open problem.
\end{abstract}

\maketitle

\section{Background}

For $z=(z_1,\dots,z_n)$ and $w=(w_1,\dots,w_n)\in\CC^n,$ the \textit{Euclidean inner product} $\langle z,w\rangle=z_1\overline{w}_1+\dots+z_n\overline{w}_n$ and its associated \textit{Euclidean} norm $||z||=\sqrt{|z_1|^2+\dots+|z_n|^2}$ make $\CC^n$ into a $n$-dimensional Hilbert space whose the open \textit{unit ball} will be denoted by $\BB=\{z\in\CC^n:||z||<1\}.$ 

The unit ball is a complete Reinhardt domain containing the origin and its topological boundary, the \textit{unit sphere} $\SSS=\{z\in\CC^n:||z||=1\}$, coincides with its Shilov boundary. Roughly speaking the Shilov boundary of a bounded domain in $\CC^n$ is the smallest subset of its topological boundary where an analogue of the maximum modulus principle take place and it is not always true that it coincides with the whole topological boundary. For instance, let us denote the \textit{unit polydisk} by $\DD_n:=\{z\in \CC^n: |z_1|,...,|z_n|<1\};$ the Cartesian product of $n$-unit disks. Its Shilov boundary is the \textit{unit Torus} $\TT^n:=\{z\in \CC^n:|z_1|=...=|z_n|=1\}$ which is much smaller than the topological boundary $\partial \DD_n=\overline{\DD}_n\setminus\DD_n.$ See \cite{Rein} and \cite{Stout} for more information on Reinhardt domains and Shilov boundaries, respectively. 

Denote the space of holomorphic functions in $\BB$ by $\textup{Hol}(\BB)$ and let $f\in \Hol.$ The geometry of the unit ball allows the function $f$ to have a power series expansion of the form:
$$f(z)=\sum\limits_{|k|=0}^{+\infty}a_kz^k, \quad\quad z=(z_1,\dots,z_n)\in\BB,$$
where $k=(k_1,\dots,k_n)$ is a multi-index of non-negative integers, $|k|=k_1+\dots+k_n$ and $z^k=z_1^{k_1}\cdots z_n^{k_n}$. The power series converges normally in $\BB,$ namely, $\sum_{k}\sup_{K}|a_kz^k|$ converges for each compact $K\subset \BB$, and this expansion of $f$ with respect to the origin is unique. See \cite{Horm} for a background on holomorphic functions in Reinhardt domains. 

\subsection{Dirichlet-type spaces}
Fix a real parameter $\alpha\in\RR.$ We say that $f$ belongs to the \textit{Dirichlet-type space} $D_\alpha(\BB)$ if
$$||f||^2_\alpha:=\sum\limits_{|k|=0}^{+\infty}(n+|k|)^\alpha\frac{(n-1)!k!}{(n-1+|k|)!}|a_k|^2<+\infty,$$
where $k!=k_1!\cdots k_n!$. The family of Dirichlet-type spaces depends on a real parameter and consists of Hilbert spaces of holomorphic functions. The $\alpha$-Dirichlet-type norm depends on the power series expansion of a holomorphic function and a weight that depends on the real parameter $\alpha.$ 

Special cases of this family are some of the classical Hilbert spaces of holomorphic functions in the unit ball of $\CC^n$. For instance, the following cases play a significant role in complex Analysis and operator Theory: the spaces  $D_{-1}(\BB), D_0(\BB), D_{n-1}(\BB)$ and $D_n(\BB)$ coincide with the well known Bergman, Hardy, Drury-Arveson and Dirichlet spaces, respectively. For more information about these spaces, we refer the interested reader to \cite{Ahern}, \cite{Perfekt}, \cite{Beatrous}, \cite{Feng}, \cite{Hartz}, \cite{Hu shi}, \cite{Hu},  \cite{Vavitsas1}, \cite{Li}, \cite{Michalska},  \cite{Vavitsas2}, \cite{Zhu}. See also \cite{Sampat}  for operators that preserve shift-cyclic functions and weighted composition operators, \cite{Puri2} about defining this family of spaces in more general domains, and \cite{Datta} for a recent discussion on cyclicity via weak$^*$ sequentially cyclicity in spaces of holomorphic functions.

Fundamental properties of Dirichlet-type spaces are the density of the space of polynomials in each space $D_\alpha(\BB)$, $\alpha\in \RR$; for each $f\in D_\alpha(\BB),$ the function $z_i\cdot f \in D_\alpha(\BB),$ $i = 1, ..., n;$  the  linear \textit{point evaluation functional} with respect to any fixed point $z\in \BB:$ $x^*_z:D_\alpha(\BB)\rightarrow \CC,$ defined by $x^*_z(f)(z):=f(z),$ is a continuous linear functional.

A key concept of differentiation on the unit ball setting is that of the \textit{radial derivative}:
$$R(f)(z):=z_1\partial_{z_1}f(z)+...+z_n\partial_{z_n}f(z), \quad z\in \BB,$$
where $f\in \Hol.$ Furthermore, it easy to verify the following relation among Dirichlet-type spaces:
\begin{equation}\label{among}
   f\in D_\alpha(\BB) \quad \text{ iff } \quad R(f)\in D_{\alpha-2}(\BB). 
\end{equation}
One can use  this relationship in order to solve estimation problems in a more convenient space. However, one cannot always easily work with the norm due to the fact that it is defined based on the Taylor coefficients. It is crucial in many cases where the norm needs to be estimated or computed, to find equivalent integral norms in order to use analytic methods. Let us define the \textit{gradient} of a holomorphic function $f\in \Hol:$
$$ \nabla(f)(z)=(\partial_{z_1}f(z),...,\partial_{z_n}f(z)), \quad z\in \BB.$$
The following norm was crucial in the characterization of cyclicity of polynomials in the two dimensional ball setting, see \cite{Vavitsas1}.
\begin{lemma}\label{eq norm}
Let $\alpha\in (-1,1)$ be fixed and define $\mathcal{N}_\alpha:D_\alpha(\BB)\rightarrow [0,\infty)$ by 
$$\mathcal{N}_\alpha(f)=\sqrt{|f(0)|^2+|f|_\alpha^2},$$
where
$$|f|^2_\alpha:=\int_{\BB} \frac{||\nabla(f)(z)||^2 - |R(f)(z)|^2}{ (1-||z||^2)^\alpha} dA(z).$$
Then $\mathcal{N}_\alpha$ is a norm in $D_\alpha(\BB)$ equivalent to $||\cdot||_{D_\alpha(\BB)}.$
\begin{proof}
This is an immediate consequence of the arguments in \cite{Michalska} and the expansion $(3)$ in p. 349 of \cite{Vavitsas1}.
\end{proof}
\end{lemma}	
Let us mention that the equivalent characterization $f\in D_\alpha(\BB)$ if and only if $|f|_\alpha<+\infty,$ was first claimed by S. Li in \cite{Li}. However, the proof relied on a technical lemma concerning the computation of certain series, which turns out to be false. The error was later detected by M. Michalska, who established the equivalence for the range $\alpha\in (-1,1),$ see \cite{Michalska}.

\subsection{Cyclic vectors}
Consider the so called \textit{shift operator} $T_i:D_\alpha(\BB)\to D_\alpha(\BB),$ $i=1,..,n,$ defined by $T_i(f)(z):= z_i\cdot f(z)$. A function $f\in D_\alpha(\BB)$ is called \textit{cyclic} if the closed invariant subspace
$$[f]_\alpha:=\textrm{clos span}\{z_1^{k_1}\cdots z_n^{k_n}f:k_1,\dots,k_n=0,1,2,\dots\}$$
coincides with the whole space $D_\alpha(\BB).$ The closure is taken with respect to the norm of $D_\alpha(\BB).$ Equivalently, $f\in D_\alpha(\BB)$ is cyclic if and only if there exists a sequence of polynomials $p_n\in \CC[z]$ such that $p_nf\rightarrow 1$ in $D_\alpha$ norm or weakly. 

Let us point out that cyclic functions in $D_\alpha(\BB)$ cannot vanish in the unit ball.
This is a consequence of the continuity of the point evaluation functionals. Therefore, the study
of cyclic functions in Dirichlet-type spaces focuses on functions that are zero-free in the unit ball. A wide variety of published work points to the fact that the cyclicity of $f$ in the spaces $\mathcal{D}_\alpha,$ $\alpha\in (0,n]$ is inextricably related to the size of the \emph{zero set} $\mathcal{Z}(f^*):=\{\zeta\in \SSS:f^*(\zeta)=0\}$, where $f^*$ denotes the radial limit function of $f$, namely, $f^*(\zeta):=\lim_{r\rightarrow 1^-}f(r\zeta)$. Recall that $f^*$ is defined almost everywhere on $\SSS$ for $f\in D_\alpha(\BB),$ $\alpha\geq 0;$ this is due to the fact that the spaces are contained in the Hardy space in which the radial limit exist almost everywhere.  If $p$ is a polynomial, then $\mathcal{Z}(p^*)=\mathcal{Z}(p)\cap\SSS.$ The points lying on $\mathcal{Z}(p)\cap \SSS$ are called \emph{boundary zeroes} of $p$.

Trivial examples of cyclic functions are the non-zero constant functions. This is
a consequence of the density of the polynomials.

\section{Discussion on the characterization of cyclic polynomials}
\begin{problem}[Open]\label{conjecture}
   Let $p\in\CC[z_1,\dots,z_n]$ be a zero-free in $\BB$ polynomial. Suppose that $\mathcal{Z}(p)\cap\SSS$ contains a real submanifold of $\RR^{2n}$ of dimension $m-1$, $m=2,3,\dots,n$, but no submanifold of any higher dimension. Then $p$ is cyclic in $D_\alpha(\BB)$ if and only if $\alpha\le\frac{2n-(m-1)}{2}$.  
\end{problem}

We see in the above characterization that the cyclicity of a polynomial depends exactly on the size of the zeroes lying on the unit sphere, that is, the real dimension of the set $\mathcal{Z}(p)\cap \SSS.$ Recall that we may always regard a subset $M\subset \CC^n$ as a a subset of $\RR^{2n},$ and hence to define the \textit{real dimension}: $\textrm{dim}_{\RR}M:=\max\{\textrm{dim}\Gamma:\Gamma\subset M \text{ submanifold of } \RR^{2n}\}.$ The case $\dim_\RR M=-1$ is devoted to the instance when $M=\emptyset.$ Moreover, see \cite{Brown-Shields}, \cite{Primer} for the one-variable setting and \cite{Kosinski-Sola1}, \cite{Kosinski-Sola2} for the correspinding bidisk setting.

Let us recall the definition of a semi-algebraic set, see \cite{Algebraic} for a background. Fix $N\in\mathbb{N}$. A set $A\subset\RR^N$ is said to be \textit{semi-algebraic} if for any $x\in\mathbb{R}^N$, there exist a neighborhood $U=U(x)$ and a finite number of polynomials $f_i$, $g_{ij}$, $i=1,\dots,p$, $j=1,\dots,q$, $p,q\in\mathbb{N}$, such that
$$A\cap U=\bigcup\limits_{j=1}^{q}\bigcap\limits_{i=1}^{p}\{x\in U:f_i(x)=0, g_{ij}(x)>0\}.$$
Moreover, the set $A$ is said to be \textit{algebraic} if
$$A\cap U=\{x\in U: f_i(x)=0\}.$$

Given a polynomial $p\in\CC[z_1,\dots,z_n]$, the set $\mathcal{Z}(p)\cap\SSS$ may be viewed as an algebraic set in $\RR^{2n}$; indeed, it is the intersection of the unit sphere $\SSS=\{z\in\CC^n: 1-||z||^2=0\}$ and the two algebraic sets $\{z\in\CC^n: \textup{Im }p(z)=0\}$, $\{z\in\CC^n: \textup{Re }p(z)=0\}$. Therefore, the set of the boundary zeroes of a polynomial may be regarded as an algebraic subset of $\RR^{2n}$.

The Problem~\ref{conjecture} was formulated in terms of the explicit description of the semi-algebraic set $\mathcal{Z}(p)\cap \SSS$, see \cite{Vavitsas3}, and the characterization of cyclicity of the \textit{model polynomials} $\pi_m(z):=1-m^{\frac{m}{2}}z_1\cdots z_m,$ $m=1,...,n;$ the polynomial $\pi_m$ is cyclic in $D_\alpha(\BB)$ if and only if $\alpha\le\frac{2n-(m-1)}{2}$ and its boundary zeroes are of real dimension $\dim_{\RR}\mathcal{Z}(\pi_m)\cap \SSS=m-1,$ see \cite{Vavitsas2}. In particular, the set  $\mathcal{Z}(p)\cap \SSS$ is either a finite set or it is a finite disjoint union of Nash submanifolds - components, each Nash diffeomorphic to an open hypercube of dimension at most $n-1.$ These components are in particular real analytic submanifolds and equivalent to the hypercube by complex tangential real analytic diffeomorphisms. Each component is also a complex tangential subset of $\SSS.$ A background on semi-algebraic geometry and interpolation theory in the unit ball can be found in \cite{Algebraic} and \cite{Rudin}.

In the Problem~\ref{conjecture}, we exclude the case when $\mathcal{Z}(p)\cap \SSS$ is a finite set ($\dim_{\RR}\mathcal{Z}(p)\cap \SSS=0$) since it is possible to apply the same arguments as in \cite{Vavitsas1} or \cite{Perfekt} to overcome this case. The necessary part of cyclicity was proved in \cite{Vavitsas3}. 

It remains to prove the sufficient part, that is, if a polynomial $p\in \CC[z_1,...,z_n]$ satisfies the assumption of the statement above, then $p$ is cyclic in $D_\alpha(\BB)$ for $\alpha\le\frac{2n-(m-1)}{2}.$

\subsection{The Radial Dilation Method} 
If $f\in \Hol$ and $r\in (0,1),$ then $f_r$ denotes the \emph{dilated function} defined for $|z|<1/r$ by $f_r(z)=f(rz).$ Note that if $f$ has no zeros in $\BB,$ then $1/f_r$ is well defined on $\BB$ and $1/f_r\in \mathcal{O}(\dBB),$ where the last symbol denotes the class of the functions that are holomorphic in a neighbourhood of the closed unit ball. 

The following method for establishing cyclicity is known as the radial dilation method:
let $p\in \CC[z_1,...,z_n]$ be a polynomial satisfying the assumption of the Problem~\ref{conjecture}. Prove that
$$\limsup_{r\rightarrow 1^-}||p/p_r||_{\alpha}<\infty,$$ where $\alpha=\frac{2n-(m-1)}{2}$ and derive cyclicity.

Such tools for identifying cyclicity were crucial in \cite{Kosinski-Sola2}, \cite{Vavitsas1}, \cite{Vavitsas2} and \cite{Puri1}. For instance, let us give a short description of the steps in the proof of the cyclicity part of the main Theorem in \cite{Vavitsas1} (cyclicity of a zero-free in $\mathbb B_2$ polynomial $p\in \CC[z_1,z_2]$ in $D_\alpha(\mathbb B_2)$ for $\alpha\leq 3/2$). The main idea was to use the relation among Dirichlet-type spaces \eqref{among} in order to switch the problem of estimating $||p/p_r||_{3/2}$ to the estimation of $||R(p/p_r)||_{-1/2}.$ Thanks to Lemma~\ref{eq norm}, it is enough to estimate integrals over the unit ball. Moreover, by a compactness argument it is enough to estimate the integrals locally, that is, around each fixed point $\zeta=(\zeta_1,\zeta_2)\in \mathbb S_2.$ This allows us to write the polynomial in each Weierstrass form and deal with operations of terms $(1-z_2h_i(z_2))/(1-rz_2h_i(rz_1))$ as well as partial derivatives of them. The functions $h_i$ come out from the Weierstrass form and they are defined on slight domains around $\zeta_1,$ see \cite{Kosinski-Sola2}. Tools arising from the theory of analysis such as Puiseux's Parametrization Theorem and B\l{}ocki’s Lemma lead to the estimation of certain integrals where one may use elementary calculus techniques and identify cyclicity for $\alpha\leq 3/2.$ 

One may ask if it is possible to employ the same method in the $\BB$-setting in order to prove that $\limsup_{r\rightarrow 1^-}||p/p_r||_{\alpha}<\infty,$ where $\alpha=\frac{2n-(m-1)}{2}$. The problem in the multi-variable setting is slightly more complex, since applying this method requires additional considerations. For instance, pick $\alpha=\frac{2n-(m-1)}{2},$ $m=2,...,n.$ By \eqref{among}, there exists $q\in \mathbb{N}$ and $\upsilon\in [-1,1]$ such that $\alpha=2q+\upsilon.$ Thus,
$$f\in D_\alpha(\BB) \quad \text{ iff } \quad R^{q}(f)\in D_{\upsilon}(\BB).$$ If we focus on the case $\upsilon\in (-1,1)$, then by the equivalent norm in Lemma~\ref{eq norm}, we need to deal with the estimation of the terms $\mathcal{N}_{\upsilon}(R^q(p/p_r)).$  

We see then that the estimations involve integrals of operations on high-order partial derivatives. In order to avoid the complexity of such operations, N. Chalmoukis,  wondered whether we could find a way to shift the difficulty of operations of high order partial derivatives to one variable derivatives and simplify the terms we need to estimate. Thanks to the equalities below we are able to obtain certain integrals for estimation:
\begin{equation}\label{usef}
R(f)(wz)=w\partial_wf(wz),\quad  wz:=(wz_1,...,wz_n), \quad w\in \CC,
\end{equation}
see equation (2) in \cite{Rudin}, subsection 6.4.4.. As a consequence, if $N\in \mathbb{N},$ then
\begin{equation}\label{usef}
 R^N(f(w\cdot z))=\sum_{k=1}^{N}a_kw^k\partial^{k}_{w}f_z(w),   
\end{equation}
see the proof of  Theorem 5.6 in \cite{Perfekt}.

We may then continue as in the two-dimensional setting, that is, factorizing the polynomial by means of the Two Function Lemma-Weierstrass Parametrization Theorem, applying the equality \eqref{usef} and estimating the integrals of the norm  that arise locally in $\overline{\mathbb B}_n.$ Of course, another difficulty in the multi-variable setting is the lack of Puiseux's Parametrization Theorem.

We hope that, in the near future, we will be able to make further progress beginning from this starting point.

\subsection{Another concept for identifying cyclicity}
A concept for identifying cyclicity was put forward in \cite{Perfekt} and \cite{Puri1}. In particular, due to the explicit description of the semi-algebraic set $\mathcal{Z}(p)\cap \SSS$ (see Theorem 5, \cite{Vavitsas3}), it is natural to ask whether $\mathcal{Z}(p)\cap \SSS$ has no essential singularities or whether it has a finite covering of proper compact sets $K_j\subset \SSS$ that are peak sets for $A^\infty(\BB)$ and the corrsponding peak function are cyclic in suitable Diriclet-type spaces. If the boundary zeroes satisfies such conditions, then it might be possible to derive cyclicity applying Theorem 3.6 of \cite{Perfekt}.

We see then that the fields of semi-algebraic geometry and interpolation theory in the unit ball are highly involved in the cyclicity problem.

Let us consider the covering case. The problematic components of $\mathcal{Z}(p)\cap \SSS$ that must be covered are those having singularities in their relative boundary. Let us pick a component $M\subset \mathcal{Z}(p)\cap \SSS$ and the corresponding diffeomorphism $\Phi:M\rightarrow (-1,1)^m,$ see Theorem 5 of \cite{Vavitsas3}. Of course, the closure of $M$ is a subset of the sphere, i.e. $\overline{M}\subset \SSS$, but it must be checked whether $\overline{M}$ is a peak set for $A^\infty(\BB)$ (or whether $\overline{M}$ is a  compact $C^\infty$ manifold in $\SSS$ that is complex-tangential). See \cite{Chollet}, \cite{Fornaess} and \cite{Rudin} about peak sets. 

Recall the Wing Lemma which is a generalization of Curve Selection Lemma, see Definition 9.7.8 and Theorem 9.7.10 of \cite{Algebraic}. 

\begin{theorem}
Let $Y\subset \RR^{2n}$ be Nash submanifold and $S\subset \RR^{2n}$ a semi-algebraic set such that $Y\cap S=\emptyset$ and $Y\subset \overline{S}.$ There exists a finite number of Nash wings
$$w_i:(-1,1)\times U_i\rightarrow \RR^{2n}, \quad i=1,...,k,$$
where $U_1,...,U_k$ are open semi-algebraic subsets of $Y,$ such that $w_i((0,1)\times U_i)$ is contained in $S,$ for $i=1,...,k,$ and $\dim(Y\setminus\cup_{i=1}^{k}U_i)<\dim(Y).$
\end{theorem}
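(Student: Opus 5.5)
This is the Wing Lemma (Theorem 9.7.10 of \cite{Algebraic}). The plan is to reduce it, through a Nash stratification of $Y$, to a parametrized form of the Curve Selection Lemma.

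First, we will reduce to the local, semi-algebraic setting. Since $Y$ is a Nash submanifold, it is semi-algebraic. We decompose $Y$ into finitely many Nash cells, each Nash diffeomorphic to an open hypercube, and keep only the cells of top dimension $d=\dim Y$. The union of the lower-dimensional cells has dimension $<d$ and can be discarded. So it suffices to produce wings over a dense open semi-algebraic subset of each top-dimensional cell $C\cong(-1,1)^d$.

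Second, we use the distance function. Let $\rho(x)=\operatorname{dist}(x,Y)^2$, which is semi-algebraic. For $y\in Y$ and $\epsilon>0$, consider the fibre set
$$S_{y,\epsilon}=\{x\in S: \|x-y\|<\epsilon\}.$$
Since $Y\subset\overline S$, this set is nonempty for every $\epsilon>0$. By Hardt triviality (semi-algebraic trivialization) applied to the map $(x,y)\mapsto (y,\|x-y\|)$ restricted to $\{(x,y):x\in S, y\in C\}$, there is a dense open semi-algebraic subset $U\subset C$ and an $\epsilon_0>0$ over which this map is semi-algebraically trivial.

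Third, we apply semi-algebraic choice (definable Skolem functions) together with generic smoothness. This gives a semi-algebraic map
$$\gamma:(0,\epsilon_0)\times U\to S \quad\text{with}\quad \|\gamma(t,y)-y\|=t.$$
Shrinking $U$ to a dense open subset, we may take $\gamma$ to be Nash in $(t,y)$; semi-algebraic maps are Nash off a set of lower dimension.

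The main obstacle is extending $\gamma$ analytically across $t=0$, so that the wing is Nash on $(-1,1)\times U$ with $w(0,y)=y$. For this we will use a Puiseux-type argument with parameters. Over a further dense open subset of $U$, the Nash function $\gamma$ has a Puiseux expansion in $t^{1/q}$ whose coefficients are Nash functions of $y$, for some uniform $q$; uniformity comes from finiteness of the complexity of semi-algebraic families. Reparametrizing $t=s^q$ and rescaling $s\in(0,1)$ then gives a Nash map $w(s,y)$ on $(-1,1)\times U$ with $w(0,y)=y$ and $w((0,1)\times U)\subset S$.

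Each cell contributes finitely many such $U$, and the complement of their union in $Y$ has dimension $<d$. This yields the finite family of wings $w_i$ required by the statement.
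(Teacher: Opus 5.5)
The paper does not prove this statement; it only quotes the Wing Lemma from \cite{Algebraic}, so your sketch has to stand on its own.

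Your strategy is the natural one: choose points $\gamma(t,y)\in S$ semi-algebraically with $\|\gamma(t,y)-y\|=t$, then reparametrize $t=s^q$. The preliminary steps need only small repairs.
\begin{itemize}
\item Hardt triviality is not needed, and it does not by itself produce a product $U\times(0,\epsilon_0)$. What you need is this: for each $y$, the set $\{\|x-y\|:x\in S\}$ is a semi-algebraic subset of $(0,\infty)$ accumulating at $0$. Hence it contains an interval $(0,\delta(y))$, and this is what makes the sphere slices nonempty, not just the balls $S_{y,\epsilon}$.
\item To get $\gamma$ Nash on a whole band $\{0<t<\epsilon(y)\}$, use that its non-Nash locus has dimension at most $d$. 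It therefore has finite fibres over $y$ outside a set of dimension $<d$.
\item Radii should be positive Nash functions of $y$ rather than a constant, followed by the rescaling $s\mapsto\epsilon(y)s$.
\end{itemize}

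The genuine gap is the step you yourself call the main obstacle. Bounded complexity yields only a uniform ramification index. For instance $q=D!$ works, where $D$ is the $Z$-degree of a polynomial $P(y,t,Z)$ annihilating a coordinate of $\gamma$ (in a Nash chart $C\cong(-1,1)^d$). This shows only that $s\mapsto\gamma(s^q,y)$ extends analytically across $s=0$ \emph{for each fixed} $y$, which is just curve selection applied pointwise. It gives neither Nash coefficients $c_k(y)$, nor a radius of convergence locally bounded below. Above all, it does not give analyticity of the extension \emph{jointly} in $(s,y)$ at the points of $\{0\}\times U$. Joint analyticity there is exactly what makes $w$ a Nash map on $(-1,1)\times U$; it is the whole content of the Wing Lemma beyond curve selection.

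One way to supply it is the following.
\begin{enumerate}
\item Take $P$ squarefree in $Z$. Write its $Z$-discriminant as $t^m\Delta_1(y,t)$ and its $Z$-leading coefficient as $t^e b_1(y,t)$, with $\Delta_1(y,0)b_1(y,0)\not\equiv 0$. Discard the proper algebraic set where $\Delta_1(y,0)b_1(y,0)=0$.
\item Near any other point, over a complex polydisc $V\times\{0<|t|<\rho\}$, the roots form an unramified $D$-sheeted covering. This set has fundamental group $\mathbb Z$, and the monodromy permutation has order dividing $D!$. So after $t=s^{D!}$ the roots become single-valued holomorphic functions.
\item The Cauchy bound $|Z|\le C|t|^{-e}$ and Riemann's extension theorem show that these roots are meromorphic along $s=0$, with Laurent coefficients holomorphic in $y$.
\item By connectedness, one branch equals $\gamma(s^{q},y)$ for real $y$ and small $s>0$. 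It is bounded as $s\to 0^+$, so its negative Laurent coefficients vanish on $V\cap\RR^d$, hence on $V$.
\item Therefore this branch is holomorphic on $V\times\{|s|<\rho^{1/q}\}$. It is real on real points and algebraic, hence Nash.
\item The radius can be chosen semi-algebraically: it is controlled by the distance from $0$ to the complex zeros of $t\mapsto\Delta_1(y,t)b_1(y,t)$. The local extensions glue by the identity principle, and this yields the wings.
\end{enumerate}
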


Summing up we may consider the following questions.
\begin{question*}
Is it possible to apply Wing Lemma to find a finite or a countable proper covering of $\mathcal{Z}(p)\cap \SSS$ consisting of compact peak sets for $A^\infty(\BB)?$ Consider a more straightforward case: what if we assume that $\dim_{\RR}\mathcal{Z}(p)\cap\SSS=1?$
\end{question*}

\section*{Acknowledgements}
I would like to thank N. Chalmoukis, Ł. Kosiński, K. Zarvalis, P. Ziarati for the valuable discussions we had on these topics. I would like also to thank A. Beslikas, A. Chavan and Ł. Kosiński for the invitation to participate to the conference Bench Math Session,  2025.

\bibliographystyle{plain}

\end{document}